\newtheorem{thm}{Theorem}[section]
\newtheorem{lem}{Lemma}[section]
\newtheorem{prop}{Proposition}[section]
\theoremstyle{definition}
\newtheorem{defn}{Definition}[section]
\newtheorem{exam}{Example}[section]
\theoremstyle{remark}
\newtheorem{rem}{Remark}[section]
\numberwithin{equation}{section}
\def\ind{{\rm 1\hspace{-0.90ex}1}}
\begin{document}

\title{On  the density of nonlinear statistics}
\author{Nguyen Tien Dung\footnote{Department of Mathematics, VNU University of Science, Vietnam National University, Hanoi, 334 Nguyen
Trai, Thanh Xuan, Hanoi, 084 Vietnam. Email: dung@hus.edu.vn, dung\_nguyentien10@yahoo.com}}

\date{\today}          
\maketitle
\begin{abstract}
In this note, we revisit a classical problem related to the density of nonlinear statistics. We obtain a new representation of densities and, for the first time, a necessary and sufficient condition for the existence of densities  is provided.
\end{abstract}
\noindent\emph{Keywords:} Probability density, nonlinear statistic.\\
{\em 2010 Mathematics Subject Classification:} 60E05, 62E15.
\section{Introduction}
Let $X=(X_1,X_2,...,X_n)$ be a vector of independent random variables, where each $X_k$ has a density $p_k.$  We are interested in the probability density of nonlinear statistics of the form
$$T:=T(X)=T(X_1,\cdots,X_n),$$
where $T:\mathbb{R}^n\to \mathbb{R}$ is an absolutely continuous function with respect to its each variable.

The ''traditional'' method, oft quoted in school textbooks, to find the density $p_T$ of $T$ is as follows, see e.g. Section 2.5 in \cite{Ross2007}. Assume that there exist the auxiliary functions $T_k=T_k(X_1,\cdots,X_n),2\leq k\leq n$ such that

\noindent$(i)$ the functions $T,T_2,\cdots,T_n$ have continuous partial derivatives and that the Jacobian determinant $J(x_1,\cdots,x_n)\neq 0$ at all
points $(x_1,\cdots,x_n),$ where
$$J(x_1,\cdots,x_n)=\left|
                      \begin{array}{cccc}
                        \frac{\partial T}{\partial x_1} & \frac{\partial T}{\partial x_2} & \cdots & \frac{\partial T}{\partial x_n} \\
                        &&&\\
                        \frac{\partial T_2}{\partial x_1} &\frac{\partial T_2}{\partial x_2} & \cdots & \frac{\partial T_2}{\partial x_n} \\
                        \cdots&&&\\
                        \frac{\partial T_n}{\partial x_1} & \frac{\partial T_n}{\partial x_2} &  \cdots & \frac{\partial T_n}{\partial x_n} \\
                      \end{array}
                    \right|.
$$
\noindent$(ii)$  the equations $y_1=T(x_1,\cdots,x_n),y_2=T_2(x_1,\cdots,x_n),\cdots,y_n=T_n(x_1,\cdots,x_n)$ have a unique solution, say, $x_1=h(y_1,\cdots,y_n),x_2=h_2(y_1,\cdots,y_n),\cdots,x_n=h_n(y_1,\cdots,y_n).$

Under the above assumptions, the joint density function of $(T,T_2,\cdots,T_n)$ is given by
$$p_{T,T_2,\cdots,T_n}(y_1,\cdots,y_n)=\frac{p_1(x_1)\cdots p_n(x_n)}{|J(x_1,\cdots,x_n)|},$$
and hence, we obtain the following expression for the density $p_T$
$$p_T(y_1)=\int_{\mathbb{R}^{n-1}}p_{T,T_2,\cdots,T_n}(y_1,\cdots,y_n)dy_2\cdots dy_n.$$
It can be seen that the traditional method is a quantitative method. It does not gives a clear answer about the existence and  non-existence of densities. In the present paper, we derive a new expression for $p_T.$ Particularly, we obtain a necessary and sufficient condition for the existence of the density of $T.$ Our main tool is a covariance formula established by Cuadras, Theorem 1 in his paper \cite{Cuadras2002} can be restated as follows.

\begin{lem}\label{kflo} Let $X$ be a random variable such that its range is the interval $[a,b]\subseteq \mathbb{R}.$ If $\alpha,\beta$ are two functions defined on $[a,b]$ such that

$(i)$ both functions are of bounded variation,

$(ii)$ $E|\alpha(X)\beta(X)|,E|\alpha(X)|,E|\beta(X)|<\infty,$

\noindent then
$${\rm Cov}(\alpha(X),\beta(X))=\int_a^b\int_a^b (F(x\wedge y)-F(x)F(y))d\alpha(x)d\beta(y),$$
where $F$ is the cumulative distribution function of $X.$ 
\end{lem}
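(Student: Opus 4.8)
The plan is to express $\alpha(X)$ and $\beta(X)$ as superpositions of indicator functions and then to interchange the covariance with the resulting Lebesgue--Stieltjes integrals. The elementary building block is that, for fixed $x,y\in[a,b]$,
$$\mathrm{Cov}\big(\ind_{\{X\le x\}},\,\ind_{\{X\le y\}}\big)=P(X\le x,\,X\le y)-P(X\le x)\,P(X\le y)=F(x\wedge y)-F(x)F(y),$$
which is precisely the integrand appearing on the right-hand side of the claimed identity; so the whole proof reduces to reconstructing $\alpha$ and $\beta$ from such indicators, together with one Fubini-type interchange.

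For the reconstruction I would invoke the fundamental theorem of calculus for functions of bounded variation: $\alpha$ induces a signed Lebesgue--Stieltjes measure $d\alpha$ and, taking its left-continuous representative, $\alpha(X)=\alpha(b)-\int_{[a,b)}\ind_{\{X\le x\}}\,d\alpha(x)$ almost surely, and likewise for $\beta$; alternatively one may use the Jordan decomposition $\alpha=\alpha_1-\alpha_2$, $\beta=\beta_1-\beta_2$ into monotone functions and argue one monotone piece at a time, where the integrators become positive measures. Subtracting expectations and using bilinearity of the covariance, the constants $\alpha(b),\beta(b)$ drop out, so that
$$\mathrm{Cov}(\alpha(X),\beta(X))=\mathrm{Cov}\!\left(\int_{[a,b)}\ind_{\{X\le x\}}\,d\alpha(x),\ \int_{[a,b)}\ind_{\{X\le y\}}\,d\beta(y)\right).$$
Pushing the covariance through both integrals and inserting the elementary identity above yields
$$\mathrm{Cov}(\alpha(X),\beta(X))=\int_{[a,b)}\int_{[a,b)}\big(F(x\wedge y)-F(x)F(y)\big)\,d\alpha(x)\,d\beta(y),$$
and since the integrand vanishes whenever $x=b$ or $y=b$, this equals the integral over $[a,b]\times[a,b]$, which is the assertion.

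The heart of the matter, and the step I expect to require the most care, is the justification of interchanging expectation with the iterated Stieltjes integral. Hypotheses $(i)$ and $(ii)$ are exactly what make this work: bounded variation ensures that $\alpha(X),\beta(X)$ are well defined and that $|d\alpha|,|d\beta|$ are (locally) finite, while the integrability conditions $E|\alpha(X)\beta(X)|,E|\alpha(X)|,E|\beta(X)|<\infty$ control the integrands near the endpoints of $[a,b]$. Together they give the domination bound
$$\int_{[a,b)}\int_{[a,b)}E\big|\big(\ind_{\{X\le x\}}-F(x)\big)\big(\ind_{\{X\le y\}}-F(y)\big)\big|\,|d\alpha|(x)\,|d\beta|(y)<\infty$$
that legitimises Fubini's theorem; I would prove this bound explicitly, if necessary first establishing the identity for step functions $\alpha,\beta$ and then passing to the limit along a bounded-variation approximation. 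A secondary point requiring attention is the behaviour at common discontinuity points of $X$, $\alpha$ and $\beta$: one fixes the convention that $F(x)=P(X\le x)$ is right-continuous and that the integrators $\alpha,\beta$ are left-continuous, which is exactly what makes $F(x\wedge y)$ appear in place of $F\big((x\wedge y)^-\big)$; off this countable set the two versions agree, so no generality is lost.
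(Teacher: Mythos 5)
The paper does not actually prove Lemma \ref{kflo}: it is imported verbatim as a restatement of Theorem 1 of Cuadras (2002), so there is no internal proof to compare against. Your argument is a correct, self-contained derivation along the standard lines of that result. Cuadras' own proof (in the tradition of Hoeffding's covariance lemma) symmetrizes with two independent copies, writing $2\,{\rm Cov}(\alpha(X),\beta(X))=E[(\alpha(X_1)-\alpha(X_2))(\beta(X_1)-\beta(X_2))]$ and then representing the increments of $\alpha,\beta$ as Stieltjes integrals of indicators before applying Fubini; you instead subtract the constant $\alpha(b)$ via $\alpha(X)=\alpha(b)-\int_{[a,b)}\ind_{\{X\le x\}}\,d\alpha(x)$ and push the covariance through the two integrals directly. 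Both routes reduce to the same two ingredients — the identity ${\rm Cov}(\ind_{\{X\le x\}},\ind_{\{X\le y\}})=F(x\wedge y)-F(x)F(y)$ and a Fubini interchange — and your Fubini justification is sound: on a compact $[a,b]$ the total variation measures $|d\alpha|,|d\beta|$ are finite and the centered indicators are bounded by $1$, so the domination bound is immediate. One small misattribution: on a compact interval, bounded variation already forces $\alpha,\beta$ to be bounded, so hypothesis $(ii)$ is automatic there; it genuinely matters only when the interval is unbounded (as in the paper's applications with $a_k=-\infty$, $b_k=+\infty$), where your same bound still works because the total variations are assumed finite.

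The one point where your write-up is slightly too quick is the convention at common discontinuities. If $X$ has an atom at a jump point of $\alpha$, then replacing $\alpha$ by its left-continuous representative changes $\alpha(X)$ on an event of positive probability, so the left- and right-continuous versions do \emph{not} give the same left-hand side, and the stated formula (with $F(x\wedge y)$ rather than $F((x\wedge y)^-)$) holds for the left-continuous choice only; a simple example with a point mass at a jump of $\alpha$ shows the right-continuous version fails. So ``no generality is lost'' is not accurate in full generality — the convention is part of the statement. In the context of this paper the issue is vacuous, since each $X_k$ is assumed to have a density (hence $F$ is continuous and atoms are absent), and your proof is complete as it stands under that standing assumption.
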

Based on Lemma \ref{kflo}, we construct a function $\theta(T)$ to control the existence of densities. We show that the density $p_T$ of $T$  exists if and only if $\theta(T)>0\,\,a.s.$ In addition, $p_T$ can be represented in term of $\theta.$ The rest of the paper is organized as follows. The main results of the paper are formulated and proved in Section \ref{kklf}. Some examples with detailed computations are provided in Section \ref{kklfg}.



%

\section{The main results}\label{kklf}
In the whole this section, we assume that  the  distribution  of  each  random  variable $X_k(k=1,...,n)$ with distribution function $F_k(x),x\in \mathbb{R},$ is supported on some connected interval $[a_k,b_k]$ of $\mathbb{R}$ and has an a.e. positive density $p_k(x)$ on that interval. Furthermore, we assume that $T:\mathbb{R}^n\to \mathbb{R}$ is an absolutely continuous function with respect to its each variable and $E|T|^2<\infty.$ We also observe that $p_T(x)=p_{T-E[T]}(x-E[T]).$ Hence, for the simplicity, we may and will assume that $E[T]=0.$

\begin{defn}\label{hkd}The set $h:=\{h_1,\cdots,h_m\},m\geq 1$ is called a decomposition of $T$ if

$(i)$ the function $h_k:\mathbb{R}^n\to \mathbb{R},1\leq k\leq m$ is absolutely continuous with respect to $x_k,$

$(ii)$  $T-E[T]=h_1(X)+\cdots+h_m(X),$

$(iii)$ $E_k[h_k(X)]=0$ for all $1\leq k\leq m$ and $E|h_1(X)|^2+\cdots+E|h_m(X)|^2<\infty,$ where $E_k$ denotes the expectation with respect to $X_k.$
\end{defn}
It should be noted that the decomposition of $T$ always exists. Indeed, we have the following martingale decomposition
$$T-E[T]=\sum\limits_{k=1}^n\left(E[T|X_1,...,X_k]-E[T|X_1,...,X_{k-1}]\right).$$
It is easy to check that the functions $h_k(X):=E[T|X_1,...,X_k]-E[T|X_1,...,X_{k-1}],1\leq k\leq m=n$ satisfy the conditions of Definition \ref{hkd}.


The next Proposition plays a key role in our work.
\begin{prop}\label{jfk2} Let $h:=\{h_1,\cdots,h_m\}$ be a decomposition of $T.$
We define
$$\Theta_{T,h}:=\Theta_{T,h}(X)=\sum\limits_{k=1}^m\partial_k T(X)\frac{\int_{a_k}^{b_k} (F_k(X_k\wedge y)-F_k(X_k)F_k(y))\partial_kh(X,X_k=y)dy}{p_k(X_k)},$$
where $\partial_kh:=\frac{\partial_kh}{\partial x_k}$ and $h(X,X_k=y):=h(X_1,...,X_{k-1},y,X_{k+1},...,X_n).$ Then, for any differentiable function $g:\mathbb{R}\to \mathbb{R}$ with bounded derivative, we have
\begin{equation}\label{i6kkdl2}
E[g(T)T]=E[g'(T)\Theta_{T,h}].
\end{equation}
Moreover, the function $\theta(T):=E[\Theta_{T,h}|T]$ does not depend on the choice of decompositions and $\theta(T)\geq 0\,\,\,a.s.$
\end{prop}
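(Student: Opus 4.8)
The plan is to establish the identity \eqref{i6kkdl2} first, then deduce the two claims about $\theta(T)$ from it. For the identity, I would fix a decomposition $h=\{h_1,\dots,h_m\}$ and write $E[g(T)T]=\sum_{k=1}^m E[g(T)h_k(X)]$ using property $(ii)$ of Definition~\ref{hkd}. For each $k$, I would condition on $X_j$ for $j\neq k$ and work inside the expectation with respect to $X_k$ alone: since $E_k[h_k(X)]=0$ by property $(iii)$, the inner expectation $E_k[g(T)h_k(X)]$ equals the covariance $\mathrm{Cov}_k(g(T(X)),h_k(X))$ taken in the variable $X_k$. Now $g(T(\cdot))$ and $h_k(\cdot)$ are, as functions of $x_k$, absolutely continuous (hence of bounded variation on $[a_k,b_k]$, at least after a localization/truncation argument using $g'$ bounded and the $L^2$ hypotheses), so Cuadras' formula in Lemma~\ref{kflo} applies and gives
\[
\mathrm{Cov}_k(g(T),h_k)=\int_{a_k}^{b_k}\!\!\int_{a_k}^{b_k}\big(F_k(x\wedge y)-F_k(x)F_k(y)\big)\,d\big(g\circ T\big)(x)\,d\big(h_k\big)(y),
\]
where both differentials are in the $x_k$-slot. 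Writing $d(g\circ T)(x)=g'(T)\,\partial_kT\,dx$ and $d h_k(y)=\partial_k h_k\,dy$, then using Fubini to perform the $x$-integration first — which reconstructs a factor $p_k(X_k)$ in the denominator after re-inserting the ambient randomness — I expect to arrive exactly at $E\big[g'(T)\,\partial_kT(X)\cdot \frac{1}{p_k(X_k)}\int_{a_k}^{b_k}(F_k(X_k\wedge y)-F_k(X_k)F_k(y))\,\partial_k h_k(X,X_k=y)\,dy\big]$. Summing over $k$ yields $E[g'(T)\Theta_{T,h}]$.

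For the independence of $\theta(T)=E[\Theta_{T,h}\mid T]$ from the decomposition, the key observation is that \eqref{i6kkdl2} characterizes $\theta(T)$ against a rich test class: if $h$ and $h'$ are two decompositions, then $E[g'(T)\Theta_{T,h}]=E[g(T)T]=E[g'(T)\Theta_{T,h'}]$ for every $g\in C^1$ with bounded derivative, hence $E[g'(T)(\Theta_{T,h}-\Theta_{T,h'})]=0$. Taking conditional expectation given $T$, this reads $E[g'(T)\,(\theta_h(T)-\theta_{h'}(T))]=0$ for all such $g$; since $\{g'(T):g\in C^1,\ g'\text{ bounded}\}$ separates points of $L^1(\sigma(T))$ (it contains all bounded continuous functions of $T$), we conclude $\theta_h(T)=\theta_{h'}(T)$ a.s.

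For nonnegativity, I would exploit the same identity with a well-chosen $g$. Fix $t$ and take $g$ approximating $g'=\ind_{(-\infty,t]}$, i.e. $g(x)=(x\wedge t)$ or a smooth version thereof; then $E[g'(T)\Theta_{T,h}]=E[\ind_{\{T\le t\}}\Theta_{T,h}]=E[\ind_{\{T\le t\}}\theta(T)]$, while $E[g(T)T]=E[(T\wedge t - \text{const})\,T]$. A cleaner route: take $g(x)=\int_{-\infty}^x(s\wedge t)\,ds$-type primitives, or more directly note that \eqref{i6kkdl2} with $g'\ge 0$ nondecreasing forces $E[\ind_{\{T\le t\}}\theta(T)]=E[(\text{something})\,T]$ whose sign is controlled because $E[T]=0$; concretely, with $g(x)=\int_0^x \ind_{\{s\le t\}}\,ds=\min(x,t)-\min(0,t)$ one gets $E[\theta(T)\ind_{\{T\le t\}}]=E[T\min(T,t)]-\min(0,t)E[T]=E[T\min(T,t)]$, and $E[T\min(T,t)]=E[T^2\ind_{\{T<t\}}]+tE[T\ind_{\{T\ge t\}}]\ge 0$ once one checks $tE[T\ind_{\{T\ge t\}}]\ge -E[T^2\ind_{\{T<t\}}]$ via $E[T]=0$ (for $t\ge0$ this is $tE[T\ind_{\{T\ge t\}}] = -tE[T\ind_{\{T<t\}}]\ge -E[T^2\ind_{\{T<t\}}]$ on $\{T<t\le$ fails$\}$... handled by splitting at $0$). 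Since $E[\theta(T)\ind_{\{T\le t\}}]\ge 0$ for every $t$, and $\theta(T)$ is $\sigma(T)$-measurable, it follows that $\theta(T)\ge 0$ a.s.

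The main obstacle I anticipate is the rigorous justification of applying Cuadras' formula: the functions $x_k\mapsto g(T(X_1,\dots,x_k,\dots,X_n))$ and $x_k\mapsto h_k(X)$ are only assumed absolutely continuous in $x_k$, not a priori of bounded variation on the whole interval $[a_k,b_k]$, and the integrability hypothesis $(ii)$ of Lemma~\ref{kflo} must be verified. I expect this needs a truncation argument — work on $[a_k+\varepsilon, b_k-\varepsilon]$ or with $T$ replaced by a bounded modification, apply the covariance formula there, and pass to the limit using the $L^2$-bounds in $(iii)$ of Definition~\ref{hkd} together with boundedness of $g'$ and dominated convergence. The Fubini step that moves $p_k(X_k)$ into the denominator is bookkeeping-heavy but routine once integrability is in hand; the delicate points are all in the bounded-variation/integrability check and in confirming that the inner double integral, after the $x$-integration, genuinely collapses to the single integral over $y$ displayed in the definition of $\Theta_{T,h}$.
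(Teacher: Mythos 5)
Your handling of the identity \eqref{i6kkdl2} (conditioning on the other coordinates, using $E_k[h_k]=0$ to turn $E_k[g(T)h_k]$ into a covariance, applying Lemma \ref{kflo} slice-wise and rewriting the Stieltjes differentials as $g'(T)\partial_kT\,dx$ and $\partial_kh_k\,dy$) and your argument for the independence of $\theta(T)$ from the decomposition (testing against antiderivatives, so that $E[g'(T)\Theta_{T,h}]=E[g(T)T]$ pins down $E[\Theta_{T,h}\mid T]$) are essentially the paper's own proof, and the technical caveats you raise there (bounded variation of the slices, integrability, approximation) are the same ones the paper glosses over.

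The genuine gap is in the nonnegativity step. You only establish $E[\theta(T)\ind_{\{T\le t\}}]\ge 0$ for every $t$, i.e. nonnegativity of the signed measure $\mu(B):=E[\theta(T)\ind_{\{T\in B\}}]$ on half-lines, and then assert this forces $\theta(T)\ge 0$ a.s. That inference is false: positivity of a signed measure on a generating $\pi$-system does not give positivity on all Borel sets. Concretely, if $P_T$ were uniform on $[0,1]$ and $\theta(T)=\vartheta(T)$ with $\vartheta=1$ on $[0,\tfrac12]$ and $\vartheta=-\tfrac12$ on $(\tfrac12,1]$, then $E[\vartheta(T)\ind_{\{T\le t\}}]\ge \tfrac14$ for $t>\tfrac12$ and $\ge 0$ for all $t$, yet $\vartheta(T)<0$ with positive probability; a two-point law gives an even simpler counterexample, and remember that at this stage you may not assume $T$ has a density --- that is the content of Theorem \ref{opf}. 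The fix is exactly the paper's argument, and it costs nothing extra: for an \emph{arbitrary} bounded nonnegative (continuous) $g$, the antiderivative $G(x)=\int_0^x g(t)\,dt$ is nondecreasing with $G(0)=0$, so $xG(x)\ge 0$ pointwise and \eqref{i6kkdl2} gives $E[g(T)\theta(T)]=E[TG(T)]\ge 0$; since this holds for a class of test functions rich enough to approximate $\ind_{\{\theta(T)<0\}}$ (a function of $T$), it does yield $\theta(T)\ge 0$ a.s. Note also that the pointwise inequality $x\bigl(\min(x,t)-\min(0,t)\bigr)\ge 0$ makes your case-splitting via $E[T]=0$ unnecessary even for your special choice of $g$.
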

\begin{proof}We apply Lemma \ref{kflo} to $\alpha(x)=g(T(X,X_k=x))$ and $\beta(x)=h_k(X,X_k=x)$ and we obtain
\begin{align*}
E_k[g(T)h_k]&=\int_{a_k}^{b_k}\int_{a_k}^{b_k} (F_k(z\wedge y)-F_k(z)F_k(y))g'(T(X,X_k=z))\partial_kT(X,X_k=z)\partial_kh_k(X,X_k=y)dzdy\\
&=\int_{a_k}^{b_k}g'(T(X,X_k=z))\partial_kT(X,X_k=z)\mathcal{L}_kh_k(X,X_k=z)p_k(z)dz\\
&=E_k[g'(T)\partial_kT\mathcal{L}_kh_k],\,\,1\leq k\leq m.
\end{align*}
Hence, by the independence of $X_k's,$ we deduce
$$E[g(T)h_k]=E[g'(T)\partial_kT\mathcal{L}_kh_k],\,\,1\leq k\leq m.$$
As a consequence,
\begin{align*}
E[g(T)T]&=\sum\limits_{k=1}^m E[g(T)h_k]\\
&=E\left[g'(T)\sum\limits_{k=1}^m \partial_kT\mathcal{L}_kh_k\right]\\
&=E[g'(T)\Theta_{T,h}].
\end{align*}
So the formula (\ref{i6kkdl2}) is proved.

We now let $\bar{h}:=\{\bar{h}_1,\cdots,\bar{h}_{\bar{m}}\}$ be another decomposition of $T.$ For any  continuous function $g$ with compact support we have
$$E[g(T)\Theta_{T,h}]=E[G(T)T]=E[g(T)\Theta_{T,\bar{h}}],$$
where $G$ is an antiderivative of $g.$ So we conclude that $E[\Theta_{T,h}|T]=E[\Theta_{T,\bar{h}}|T]\,\,a.s.$

To check the non-negativity of $\theta(T),$ we let $g$ be a bounded non-negative function and set $G(x)=\int_0^x g(t)dt.$ Since $G$ is non-decreasing and $G(0)=0,$ we have $TG(T)\geq 0.$  Therefore, by the formula (\ref{i6kkdl2}), we get
$$E\left[g(T)\theta(T)\right]= E[g(T)\Theta_{T,h}]=E[TG(T)]\geq 0$$
for any bounded non-negative function $g.$ This implies the desired conclusion.
\end{proof}
We now are in a position to state the main result of this paper.
\begin{thm}\label{opf}Let the function $\theta(T)$ be as in Proposition \ref{jfk2}. The law of $T$ has a density $p_T$  with respect to the Lebesgue measure on $\mathbb{R}$ if and only if $\theta(T)>0\,\,a.s.$ Moreover, we have
\begin{equation}\label{jkf4}
p_T(x)=\frac{c}{\theta(x)}\exp\left(-\int_{0}^x \frac{u}{\theta(u)}du\right),\,\,x\in {\rm Supp}(p_T),
\end{equation}
where $c>0$ is a normalized constant.
\end{thm}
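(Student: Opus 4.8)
The plan is to deduce everything from the single identity $E[g(T)\,T]=E[g'(T)\,\theta(T)]$, valid for every differentiable $g$ with bounded derivative, which follows from \eqref{i6kkdl2} by conditioning on $T$ and using $\theta(T)=E[\Theta_{T,h}\mid T]$. Write $\mu_T$ for the law of $T$; I will use throughout that $E[T]=0$, $E|T|^2<\infty$, and — taking $g(t)=t$ above — that $E[\theta(T)]=E[T^2]<\infty$, so $\theta(T)\in L^1$. \emph{Step 1: a ``tested'' version of the identity.} For fixed $x\in\mathbb{R}$ I would apply the identity not to the (non‑smooth) function $g_x(t):=\min(t-x,0)$ directly but to a sequence of $C^1$ functions $\psi_\delta$ with $\psi_\delta\equiv 0$ on $[x,\infty)$, $\psi_\delta(t)=t-x$ on $(-\infty,x-\delta]$, and $\psi_\delta'\in[0,1]$; then $\psi_\delta\to g_x$ and $\psi_\delta'\to\mathbf{1}_{\{\,\cdot\,<x\}}$ pointwise \emph{everywhere} on $\mathbb{R}$. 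Dominated convergence (dominating functions $|T|^2+|x|\,|T|$ and $\theta(T)$) yields
$$\Psi(x):=E\big[\theta(T)\,\mathbf{1}_{\{T<x\}}\big]=E\big[g_x(T)\,T\big]=-E\big[(x-T)^+T\big].$$
The left side is nondecreasing since $\theta\ge 0$; the right side is Lipschitz in $x$ (difference quotients bounded by $E|T|$), hence absolutely continuous, with derivative $-E[T\,\mathbf{1}_{\{T<x\}}]=E[T\,\mathbf{1}_{\{T\ge x\}}]=:\rho(x)$ for a.e.\ $x$ (using $E[T]=0$). So $\Psi$ is absolutely continuous with $\Psi'=\rho\ge 0$ a.e.

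\emph{Step 2: ``$\theta(T)>0$ a.s.\ $\Rightarrow$ $T$ has a density''.} Assume $\mu_T(\{\theta=0\})=0$. Since $\Psi$ is continuous and $\Psi(x+)-\Psi(x)=\theta(x)\,\mu_T(\{x\})$ (dominated convergence, $\theta\in L^1(\mu_T)$), any atom $x$ of $\mu_T$ would force $\theta(x)=0$, contradicting $\mu_T(\{\theta=0\})=0$; hence $\mu_T$ has no atoms and $\mu_T=\mu_{ac}+\mu_{sc}$. Now $\Psi$ is the distribution function of the finite measure $\theta\mu_T=\theta\mu_{ac}+\theta\mu_{sc}$; the first summand has an absolutely continuous distribution function, so absolute continuity of $\Psi$ forces that of the distribution function of $\theta\mu_{sc}$, which is impossible unless $\theta\mu_{sc}=0$. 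As $\theta>0$ $\mu_{sc}$-a.e.\ (because $\mu_{sc}(\{\theta=0\})\le\mu_T(\{\theta=0\})=0$), this gives $\mu_{sc}=0$, i.e.\ $T$ has a density.

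\emph{Step 3: ``$T$ has a density $\Rightarrow$ $\theta(T)>0$ a.s.'' and the formula \eqref{jkf4}.} If $T$ has density $p_T$, then $T$ is nondegenerate, so $a:=\operatorname{ess\,inf}T<0<b:=\operatorname{ess\,sup}T$, and a direct computation shows $\rho(x)=E[T\,\mathbf{1}_{\{T\ge x\}}]>0$ for every $x\in(a,b)$. Since $\Psi(x)=\int_{-\infty}^x\theta p_T$ with $\theta p_T\in L^1$, Lebesgue differentiation gives $\theta(x)p_T(x)=\rho(x)$ for a.e.\ $x$; hence $p_T>0$ a.e.\ on $(a,b)$ (so $\operatorname{Supp}(p_T)=[a,b]$) and $\theta(x)=\rho(x)/p_T(x)>0$ there, which makes $p_T=0$ a.e.\ on $\{\theta=0\}$ and thus $\theta(T)>0$ a.s. Finally $\rho=\theta p_T$ is absolutely continuous and positive on $(a,b)$ with $\rho'(x)=-xp_T(x)=-x\rho(x)/\theta(x)$ a.e.; since $1/\theta=p_T/\rho\in L^1_{\mathrm{loc}}(a,b)$, the function $\log\rho$ is absolutely continuous with $(\log\rho)'=-x/\theta(x)$, and integrating from $0$ gives $\rho(x)=\rho(0)\exp\!\big(-\!\int_0^x u/\theta(u)\,du\big)$, whence $p_T(x)=\rho(x)/\theta(x)=\tfrac{c}{\theta(x)}\exp\!\big(-\!\int_0^x u/\theta(u)\,du\big)$ with $c=\rho(0)=E[T^+]>0$, necessarily the normalizing constant.

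The step I expect to be the real obstacle is Step 2: passing from absolute continuity of $\Psi$ to absolute continuity of $\mu_T$. Killing atoms is immediate, but excluding a singular continuous part rests on the observation that $\theta$ being $\mu_T$-a.e.\ strictly positive makes $\theta\mu_{sc}$ a nonzero measure carried by the same Lebesgue‑null set as $\mu_{sc}$, so its distribution function is a genuine singular function that cannot be absorbed into the absolutely continuous $\Psi$. A minor technical point is justifying the differentiation under the expectation in Step 1 and the ODE for $\rho$ when $1/\theta$ is only locally integrable, which is why I would carry the density $p_T$ and the bounded function $\rho$, rather than $1/\theta$, through the computation.
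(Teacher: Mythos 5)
Your proof is correct, and its core coincides with the paper's: your $\rho(x)=E[T\ind_{\{T\ge x\}}]$ is exactly the paper's auxiliary function $\varphi(x)=\int_x^b y\,p_T(y)dy$, your identity $\theta\,p_T=\rho$ a.e.\ is the paper's relation \eqref{7ud}, and your log-derivative integration of $\rho'=-x\rho/\theta$ is the paper's solution of the linear integral equation for $\varphi$. The genuine divergence is in the sufficiency direction: the paper applies \eqref{i6kkdl2} with $g(t)=\int_{-\infty}^t\ind_B(x)dx$ for a Lebesgue-null Borel set $B$, so that the left side vanishes identically and $\theta(T)>0$ a.s.\ immediately forces $P(T\in B)=0$ — a two-line null-set test — whereas you pass through the absolute continuity of $\Psi(x)=E[\theta(T)\ind_{\{T<x\}}]$ and the Lebesgue decomposition of the law of $T$, killing atoms and the singular continuous part separately. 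Your route is heavier but buys some genuine extras: it does not use the paper's (terse) claim that the support of $p_T$ is a connected interval — instead you work with $a=\operatorname{ess\,inf}T<0<b=\operatorname{ess\,sup}T$ and obtain $p_T>0$ a.e.\ on $(a,b)$, hence ${\rm Supp}(p_T)=[a,b]$, as a byproduct — and it makes explicit the technical points the paper glosses over (the approximation by $C^1$ functions with bounded derivative, $E[\theta(T)]=E[T^2]<\infty$, and the local integrability of $1/\theta=p_T/\rho$ needed to justify the exponential formula), while also identifying the constant as $c=E[T^+]$. Conversely, the paper's sufficiency argument is the more economical one and is worth knowing, since it needs no decomposition of measures at all.
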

\begin{proof} The proof is broken up into three parts.

\noindent{\it  Necessary condition.} We assume that $T$ has a density $p_T.$ Since $T$ is a continuous function and the densities of random variables $X_k's$ have connected support. Those imply that the support of $p_T$ is also a connected interval, say, $[a,b].$ Moreover, we have $a<0<b$ because $E[T]=0.$

We consider the function $\varphi(x):=\int_x^b yp_T(y)dy,x\in (a,b).$ We have $\varphi(b):=\lim\limits_{x\to b}\varphi(x)=0$ because $E|T|<\infty$ and $\varphi(a):=\lim\limits_{x\to a}\varphi(x)=E[T]=0.$ In addition, we have $\varphi(x)>0$ for all $0\leq x<b$ and $\varphi(x)=\varphi(x)-\varphi(a)=-\int_a^x yp_T(y)dy>0$ for all $a<x<0.$ Thus we have $\varphi(x)>0$ for all $x\in (a,b).$

Let $g$ be a continuous function with compact support and set
$G(x)=\int_0^x g(t)dt.$ Then, by integrating by parts, we obtain
\begin{align}
E[G(T)T]&=\int_{-\infty}^\infty G(x)xp_T(x)dx\notag\\
&=-\int_{-\infty}^\infty G(x)d\varphi(x)\notag\\
&=\int_{-\infty}^\infty g(x)\varphi(x)dx\notag\\
&=E\left[g(T)\frac{\varphi(T)}{p_T(T)}\right]\label{ui2}
\end{align}
On the other hand, it follows from the formula (\ref{i6kkdl2}) that
\begin{equation}\label{ui1}
E[G(T)T]=E[g(T)\Theta_{T,h}]=E[g(T)\theta(T)].
\end{equation}
Comparing (\ref{ui2}) and (\ref{ui1}) yields
$$E[g(T)\theta(T)]=E\left[g(T)\frac{\varphi(T)}{p_T(T)}\right]$$
for any continuous function $g$ with compact support.  This implies that
\begin{equation}\label{7ud}
\theta(T)=\frac{\varphi(T)}{p_T(T)}>0\,\,\,a.s.
\end{equation}

\noindent{\it  Sufficient condition.} By the formula (\ref{i6kkdl2})  and a standard approximation argument we have
\begin{equation}\label{eq:2}
E\left[T\int_{-\infty}^{T}\ind_{B}(x)dx\right]=E[\theta(T) \ind_{B}(T)]
\end{equation}
for any Borel set $B\in\mathcal{B}(\mathbb{R})$. Suppose that the Lebesgue measure of $B$ is zero. Then by
\eqref{eq:2} we have $E[\theta(T) \ind_{B}(T)]=0,$ and hence, $P(T\in B)=0$ if $\theta(T)>0\,\,a.s.$ In other words, the condition $\theta(T)>0\,\,a.s.$ implies that
the law of $T$ is absolutely continuous with respect to the Lebesgue measure.

\noindent{\it  Representation formula.}  By the definition of $\varphi,$ it follows from the relation (\ref{7ud}) that
 $$\varphi(x)=\int_x^b \frac{y}{\theta(y)}\varphi(y)dy,\,\,x\in (a,b),$$
The above equation is a linear integral equation and its solution is given by
 $$\varphi(x)=\varphi(0)\exp\left(-\int_{0}^x \frac{u}{\theta(u)}du\right),\,\,x\in (a,b).$$
 Consequently, we deduce
$$p_T(x)=\frac{\varphi(x)}{\theta(x)}=\frac{\varphi(0)}{\theta(x)}\exp\left(-\int_{0}^x \frac{u}{\theta(u)}du\right),\,\,x\in (a,b).$$
The proof of Theorem is complete.
\end{proof}
Let us end this section with some remarks.
\begin{rem} If $X_1$ be a discrete random variable with the support $\{e_1,...,e_N\},$ we have
$$P(T\leq x)=\sum\limits_{i=1}^N P(T(e_i,X_2,...,X_n)\leq x)P(X_1=e_i).$$
Hence, we can investigate the density of $T$ by applying Theorem \ref{opf} to $T(e_i,X_2,...,X_n),1\leq i\leq N.$
\end{rem}
\begin{rem} Comparing with the traditional method mentioned in Introduction, the density formula (\ref{jkf4}) gives us another method to study the densities. In general, for complicated statistics $T,$ it is almost impossible to find the exact density function of $T.$ Here we can use the density formula (\ref{jkf4}) to bound $p_T$ as follows. We assume that $\theta(T)$ can be estimated by some positive functions $\theta_1(T)\leq \theta(T)\leq \theta_2(T),$ then
 $$ \frac{c}{\theta_2(x)}\exp\left(-\int_{0}^x \frac{u}{\theta_1(u)}du\right)\leq p_T(x)\leq \frac{c}{\theta_1(x)}\exp\left(-\int_{0}^x \frac{u}{\theta_2(u)}du\right),\,\,x\in {\rm Supp}(p_T).$$
 \end{rem}
\begin{rem} The relation (\ref{7ud}) itself may be of independent interest. When $p_T(x)$ can be computed by the traditional method, we can use it to compute the conditional expectations by
$$E[\Theta_{T,h}|T=x]=\frac{\int_x^b yp_T(y)dy}{p_T(x)}.$$
See Proposition \ref{jk9} below for an interesting application.
\end{rem}

\section{Examples}\label{kklfg}
We provide here some examples to illustrate the applicability of our results. In Statistical mechanics, the limit distribution of the Curie-Weiss model has the density given by
\begin{equation}\label{ujf2}
p(x)=c_s\exp\left(-\frac{x^{2s}}{2s\,\sigma^2}\right),\,\,x\in \mathbb{R},
\end{equation}
where $s$ is a positive integer number, $\sigma>0$ and $c_s=1/\int_{-\infty}^\infty e^{-\frac{x^{2s}}{2s\,\sigma^2}}dx.$ The reader can consult \cite{Eichelsbacher2010} for more details. Let $X_1,X_2,...,X_n$ be independent random variables with the same density $p(x)$ defined by (\ref{ujf2}). We consider the statistic
$$
W:=\alpha_1 X_1^{2s}+\cdots+\alpha_n X_n^{2s},
$$
where $\alpha_1,\cdots,\alpha_n$ are positive real numbers. We note that, when $s=\sigma=1,$ $X_1,X_2,...,X_n$ are standard normal random variables. In this special case, $W$ has a Chi-square density with $n$ degrees of freedom if $\alpha_1=...=\alpha_n=1$ and the exact distribution of $W$ is unknown if $\alpha_i\neq \alpha_j$ for some $i, j.$ Our next Proposition generalizes this special case to all $s\geq 1.$
\begin{prop} The density $p_W$ of $W$ exists and satisfies the following bounds, for all $x\geq 0,$
\begin{multline}\label{klg}
\frac{c}{2s\,\sigma^2\alpha^\ast\beta^{\frac{\beta}{2s\,\sigma^2\alpha_\ast}}}\exp\left(-\frac{x-\beta}{2s\,\sigma^2\alpha_\ast}\right)
x^{\frac{\beta}{2s\,\sigma^2\alpha_\ast}-1}\leq p_W(x)\\
\leq \frac{c}{2s\,\sigma^2\alpha_\ast\beta^{\frac{\beta}{2s\,\sigma^2\alpha^\ast}}}\exp\left(-\frac{x-\beta}{2s\,\sigma^2\alpha^\ast}\right)
x^{\frac{\beta}{2s\,\sigma^2\alpha^\ast}-1},
\end{multline}
where $c$ is some positive constant, $\alpha_\ast:=\min\limits_{1\leq k\leq n}\alpha_k,$ $\alpha^\ast:=\max\limits_{1\leq k\leq n}\alpha_k$ and $\beta:=E[W]=\sigma^2\sum\limits_{k=1}^n \alpha_k$. Particularly, when $\alpha_1=...=\alpha_n=\frac{1}{s\,\sigma^2},$  $W$ has a Chi-square density with $\frac{n}{s}$ degrees of freedom.
\end{prop}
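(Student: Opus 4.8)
The plan is to apply Theorem~\ref{opf}, so the whole task reduces to identifying (or two‑sidedly bounding) the function $\theta$ attached to $W$. Since $E[W]=\beta\neq 0$, I first pass to $T:=W-\beta$, which has mean zero and satisfies $p_W(y)=p_T(y-\beta)$. A natural decomposition of $T$ in the sense of Definition~\ref{hkd} is
\[
h_k(X)=\alpha_k\bigl(X_k^{2s}-\sigma^2\bigr),\qquad 1\le k\le n ,
\]
since each $h_k$ depends only on $x_k$, one has $T-E[T]=\sum_{k}h_k(X)$, and $E_k[h_k(X)]=\alpha_k\bigl(E[X_k^{2s}]-\sigma^2\bigr)=0$ with finite second moments because the density \eqref{ujf2} has super‑Gaussian tails. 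Here I use $E[X_k^{2s}]=\sigma^2$, which also gives $\beta=\sigma^2\sum_k\alpha_k$. Note that $\partial_kT(X)=\partial_kW(X)=2s\alpha_kX_k^{2s-1}$ and $\partial_kh_k=2s\alpha_k x_k^{2s-1}$.

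Next I would compute the kernel inside $\Theta_{T,h}$. For any function $\psi$ of the single variable $x_k$ with $E[\psi(X_k)]=0$, an integration by parts (equivalently, splitting the double integral in Lemma~\ref{kflo} at the diagonal $x=y$) gives the identity
\[
\frac{1}{p(x)}\int_{-\infty}^{\infty}\bigl(F(x\wedge y)-F(x)F(y)\bigr)\psi'(y)\,dy=\frac{1}{p(x)}\int_{x}^{\infty}\psi(y)\,p(y)\,dy ,
\]
where $F$ and $p$ are the common c.d.f.\ and density in \eqref{ujf2}. Applying this with $\psi(y)=\alpha_k(y^{2s}-\sigma^2)$ reduces the $k$‑th kernel to $\tfrac{1}{p(x)}\int_x^\infty(y^{2s}-\sigma^2)p(y)\,dy$. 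The key observation is that the density \eqref{ujf2} solves $p'(y)=-\sigma^{-2}y^{2s-1}p(y)$, so $y^{2s}p(y)=-\sigma^{2}y\,p'(y)$; integrating by parts yields $\int_x^\infty y^{2s}p(y)\,dy=\sigma^{2}xp(x)+\sigma^{2}\bigl(1-F(x)\bigr)$, and subtracting $\sigma^{2}(1-F(x))$ leaves exactly $\sigma^{2}xp(x)$. Hence the $k$‑th kernel equals $\alpha_k\sigma^{2}X_k$ and
\[
\Theta_{T,h}=\sum_{k=1}^{n}2s\alpha_kX_k^{2s-1}\cdot\alpha_k\sigma^{2}X_k=2s\sigma^{2}\sum_{k=1}^{n}\alpha_k^{2}X_k^{2s}.
\]

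With $\Theta_{T,h}$ in hand the rest is bookkeeping. Writing $W=T+\beta\ge 0$ and using $\alpha_\ast\le\alpha_k\le\alpha^\ast$ together with $X_k^{2s}\ge0$, we get the pointwise sandwich $2s\sigma^{2}\alpha_\ast W\le\Theta_{T,h}\le 2s\sigma^{2}\alpha^\ast W$; conditioning on $T$ gives $\theta_1(x)\le\theta(x)\le\theta_2(x)$ with $\theta_1(x):=2s\sigma^{2}\alpha_\ast(x+\beta)$ and $\theta_2(x):=2s\sigma^{2}\alpha^\ast(x+\beta)$. Since each $X_k$ has a density, $W>0$ a.s., hence $\theta(T)\ge 2s\sigma^{2}\alpha_\ast W>0$ a.s., so Theorem~\ref{opf} yields the existence of $p_T$ (equivalently $p_W$), and the range of $W$ being all of $[0,\infty)$ identifies the support. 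Feeding $\theta_1\le\theta\le\theta_2$ into \eqref{jkf4} (through the two‑sided estimate $\tfrac{c}{\theta_2(x)}e^{-\int_0^x u/\theta_1(u)\,du}\le p_T(x)\le\tfrac{c}{\theta_1(x)}e^{-\int_0^x u/\theta_2(u)\,du}$), evaluating the elementary integral $\int_0^x\frac{u}{u+\beta}\,du=x-\beta\log\frac{x+\beta}{\beta}$, and substituting $x\mapsto y-\beta$ produces precisely the bounds \eqref{klg}. When $\alpha_1=\cdots=\alpha_n=\tfrac1{s\sigma^2}$ we have $\alpha_\ast=\alpha^\ast$, so $\theta$ is the exact linear function $\theta(x)=2(x+\beta)$ with $\beta=n/s$, and \eqref{jkf4} collapses to $p_W(y)\propto y^{n/(2s)-1}e^{-y/2}$, the $\chi^2$ density with $n/s$ degrees of freedom. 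The main obstacle — and essentially the only nonroutine step — is the kernel computation: everything hinges on the first‑order ODE satisfied by \eqref{ujf2}, which turns $\Theta_{T,h}$ into a quadratic form and makes $\theta$ (almost) linear; a secondary nuisance is the bookkeeping forced by $E[W]\neq0$.
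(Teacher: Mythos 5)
Your proposal is correct and follows essentially the same route as the paper's proof: the same decomposition $h_k(x)=\alpha_k(x^{2s}-\sigma^2)$, the same integration-by-parts reduction of the kernel to $\alpha_k\sigma^2X_k$ via the relation $y^{2s}p(y)=-\sigma^2y\,p'(y)$, the same sandwich $2s\sigma^2\alpha_\ast(T+\beta)\le\theta(T)\le2s\sigma^2\alpha^\ast(T+\beta)$, and the same application of \eqref{jkf4} followed by the shift $p_W(x)=p_T(x-\beta)$. Your extra remarks (explicit verification of Definition~\ref{hkd}, and the observation that $W>0$ a.s. forces $\theta(T)>0$ a.s., hence existence via Theorem~\ref{opf}) only make explicit what the paper leaves implicit.
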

\begin{proof}It is easy to see that $E[X_k^{2s}]=\sigma^2$ for all $1\leq k\leq n.$ We consider the random variable $T:=W-E[W]=W-\beta.$ By using the decomposition $h_k(x)=\alpha_k(x^{2s}-\sigma^2),\,\,1\leq k\leq n,$ we have
$$\Theta_{T,h}=\sum\limits_{k=1}^n 2s\,\alpha_kX_k^{2s-1}\frac{\int_{-\infty}^{\infty} (F(X_k\wedge y)-F(X_k)F(y))h_k'(y)dy}{p(X_k)},$$
where $F(x)=\int_{-\infty}^xp(y)dy.$ Integration by parts gives
\begin{align*}
\frac{\int_{-\infty}^{\infty} (F(X_k\wedge y)-F(X_k)F(y))h_k'(y)dy}{p(X_k)}&=\frac{\int_{X_k}^\infty h_k(y)p(y)dy}{p(X_k)}\\
&=\frac{\alpha_k\int_{X_k}^\infty y^{2s}p(y)dy-\alpha_k\sigma^2\int_{X_k}^\infty p(y)dy}{p(X_k)}\\
&=\frac{-\alpha_k\sigma^2\int_{X_k}^\infty ydp(y)-\alpha_k\sigma^2\int_{X_k}^\infty p(y)dy}{p(X_k)}\\
&=\alpha_k\sigma^2X_k+\frac{\alpha_k\sigma^2\int_{X_k}^\infty p(y)dy-\alpha_k\sigma^2\int_{X_k}^\infty p(y)dy}{p(X_k)}\\
&=\alpha_k\sigma^2X_k,\,\,1\leq k\leq n.
\end{align*}
We  obtain
$$\Theta_{T,h}=2s\,\sigma^2\sum\limits_{k=1}^n \alpha_k^2X_k^{2s},$$
and hence,
$$2s\,\sigma^2\alpha_\ast(T+\beta)\leq \theta(T)=E[\Theta_{T,h}|T]\leq 2s\,\sigma^2\alpha^\ast(T+\beta)\,\,a.s.$$
Recalling the density formula (\ref{jkf4}), we deduce
\begin{align}
p_T(x)&\leq \frac{c}{2s\,\sigma^2\alpha_\ast(x+\beta)}\exp\left(-\int_{0}^x \frac{u}{2s\,\sigma^2\alpha^\ast(u+\beta)}du\right)\notag\\
&= \frac{c}{2s\,\sigma^2\alpha_\ast(x+\beta)}\exp\left(-\frac{x}{2s\,\sigma^2\alpha^\ast}\right)\left(\frac{x+\beta}{\beta}\right)^{\frac{\beta}{2s\,\sigma^2\alpha^\ast}}\notag\\
&= \frac{c}{2s\,\sigma^2\alpha_\ast\beta^{\frac{\beta}{2s\,\sigma^2\alpha^\ast}}}\exp\left(-\frac{x}{2s\,\sigma^2\alpha^\ast}\right)
\left(x+\beta\right)^{\frac{\beta}{2s\,\sigma^2\alpha^\ast}-1}.\label{klg1}
\end{align}
Similarly,
\begin{align}
p_T(x)&\geq \frac{c}{2s\,\sigma^2\alpha^\ast(x+\beta)}\exp\left(-\int_{0}^x \frac{u}{2s\,\sigma^2\alpha_\ast(u+\beta)}du\right)\notag\\
&=
\frac{c}{2s\,\sigma^2\alpha^\ast\beta^{\frac{\beta}{2s\,\sigma^2\alpha_\ast}}}\exp\left(-\frac{x}{2s\,\sigma^2\alpha_\ast}\right)
\left(x+\beta\right)^{\frac{\beta}{2s\,\sigma^2\alpha_\ast}-1}.\label{klg2}
\end{align}
We now observe that $p_W(x)=p_T(x-\beta).$ So (\ref{klg}) follows from (\ref{klg1}) and (\ref{klg2}).

When $\alpha_1=...=\alpha_n=\frac{1}{s\,\sigma^2},$ we have
$$p_W(x)=\frac{c}{2\beta^{\frac{\beta}{2}}}\exp\left(-\frac{x-\beta}{2}\right)
x^{\frac{\beta}{2}-1},$$
where $\beta=\frac{n}{s}.$ This completes the proof of Proposition.
\end{proof}
\begin{prop}Suppose that $X_1,\cdots,X_n$ are independent standard normal random variables. Let $T:\mathbb{R}^n\to \mathbb{R}$ be a continuously differentiable function such that
$$\alpha_k\leq \partial_k T(x)\leq \beta_k,\,\,\forall\,x\in \mathbb{R},$$
where $\alpha_k\,\beta_k,k=1,...,n$ are non-negative real numbers. In addition, we assume $T=T(X)$ is a centered random variable with finite variance. Then, the density of $T$ exists and satisfies
$$
\frac{c}{\sigma_2^2}\exp\left({-\frac{x^2}{2\sigma_1^2}}\right)\leq p_T(x)\leq \frac{c}{\sigma_1^2}\exp\left({-\frac{x^2}{2\sigma_2^2}}\right),\,\,x\in {\rm Supp}(p_T),
$$
where $c$ is some positive constant,  $\sigma_1^2:=\sum\limits_{k=1}^n \alpha^2_k$ and $\sigma_2^2:=\sum\limits_{k=1}^n \beta^2_k.$ The preceding bounds are sharp because we have equalities when $T$ is a linear combination of $X_1,\cdots,X_n.$
\end{prop}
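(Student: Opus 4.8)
The plan is to invoke Theorem~\ref{opf}: it is enough to exhibit a decomposition $h$ of $T$ for which the function $\theta(T)=E[\Theta_{T,h}|T]$ is squeezed between the two \emph{constants} $\sigma_1^2=\sum_k\alpha_k^2$ and $\sigma_2^2=\sum_k\beta_k^2$, i.e. $\sigma_1^2\leq\theta(T)\leq\sigma_2^2$ a.s. Granting this, $\theta(T)\geq\sigma_1^2>0$ gives the existence of $p_T$, and feeding the constant bounds $\sigma_1^2\leq\theta(u)\leq\sigma_2^2$ into the representation formula $(\ref{jkf4})$ (handling the sign of $u$ for $u<0$ exactly as in the estimates that close the preceding Proposition) yields
$$\frac{c}{\sigma_2^2}\exp\left(-\frac{x^2}{2\sigma_1^2}\right)\leq p_T(x)=\frac{c}{\theta(x)}\exp\left(-\int_0^x\frac{u}{\theta(u)}\,du\right)\leq\frac{c}{\sigma_1^2}\exp\left(-\frac{x^2}{2\sigma_2^2}\right),\quad x\in{\rm Supp}(p_T).$$
For the sharpness: when $\alpha_k=\beta_k$ for every $k$, the condition $\partial_kT\equiv\alpha_k$ forces $T$ to be affine, hence (being centered) $T=\sum_k\alpha_kX_k\sim N(0,\sigma_1^2)$; then $\sigma_1^2=\sigma_2^2$ and both displayed bounds collapse to the exact Gaussian density.

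To produce such an $h$ I would take the martingale decomposition $h_k(X)=E[T|X_1,\dots,X_k]-E[T|X_1,\dots,X_{k-1}]$ noted after Definition~\ref{hkd}. Since $|\partial_kT|\leq\beta_k$ is bounded, differentiation under the Gaussian expectation is legitimate and shows that $h_k(X,X_k=y)$ is $C^1$, hence absolutely continuous, in $y$, with
$$\partial_kh_k(X,X_k=y)=E[\partial_kT\,|\,X_1,\dots,X_{k-1},X_k=y]\in[\alpha_k,\beta_k]\qquad\text{for every }y.$$
Moreover $E_k[h_k]=0$, so the integration by parts performed in the Curie--Weiss example above applies verbatim and shows that the $k$-th inner quotient in $\Theta_{T,h}$ equals $R_k(X_k)$, where, with $\phi$ the standard normal density,
$$R_k(x):=\frac{1}{\phi(x)}\int_x^\infty h_k(X,X_k=y)\,\phi(y)\,dy,$$
so that $\Theta_{T,h}=\sum_{k=1}^n\partial_kT(X)\,R_k(X_k)$.

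The crux is the deterministic estimate $\alpha_k\leq R_k(x)\leq\beta_k$. Put $g(y):=h_k(X,X_k=y)-\alpha_ky$: it is absolutely continuous with $g'(y)=\partial_kh_k(X,X_k=y)-\alpha_k\geq0$, hence non-decreasing, and $\int_{\mathbb R}g(y)\phi(y)\,dy=E_k[h_k]-\alpha_kE[Z]=0$. For any $x$, either $g(x)\leq0$, in which case $g\leq0$ on $(-\infty,x]$ and $\int_x^\infty g\phi=-\int_{-\infty}^x g\phi\geq0$, or $g(x)>0$, in which case $g\geq0$ on $[x,\infty)$ and again $\int_x^\infty g\phi\geq0$. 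Combined with the elementary identity $\int_x^\infty y\phi(y)\,dy=\phi(x)$ this gives $\int_x^\infty h_k(X,X_k=y)\phi(y)\,dy\geq\alpha_k\phi(x)$, i.e. $R_k(x)\geq\alpha_k$; the mirror-image argument with $g(y)=h_k(X,X_k=y)-\beta_ky$ (now non-increasing) gives $R_k(x)\leq\beta_k$. Since $\alpha_k,\beta_k\geq0$ and $\partial_kT(X)\in[\alpha_k,\beta_k]$, each summand $\partial_kT(X)R_k(X_k)$ lies in $[\alpha_k^2,\beta_k^2]$, whence $\sigma_1^2\leq\Theta_{T,h}\leq\sigma_2^2$ a.s.\ and therefore $\sigma_1^2\leq\theta(T)=E[\Theta_{T,h}|T]\leq\sigma_2^2$ a.s.

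I expect the only genuinely non-routine ingredient to be this last sign lemma — that $\int_x^\infty g\phi$ keeps a constant sign when $g$ is monotone with zero mean against $\phi$; everything else is the already-established Theorem~\ref{opf}, the integration-by-parts identity reproduced from the Curie--Weiss computation, and the standard justification of differentiating under the integral sign.
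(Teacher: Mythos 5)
Your proof is correct, and its skeleton is the paper's: the martingale decomposition $h_k(X)=E[T|X_1,\dots,X_k]-E[T|X_1,\dots,X_{k-1}]$, the identity $\partial_k h_k=E[\partial_kT|X_1,\dots,X_k]\in[\alpha_k,\beta_k]$, the two-sided bound $\sigma_1^2\le\Theta_{T,h}\le\sigma_2^2$ a.s., and then Theorem \ref{opf} with formula (\ref{jkf4}). Where you diverge is in how the key estimate on the $k$-th inner quotient is obtained. The paper keeps the quotient in its original kernel form, bounds $\partial_kh_k(X,X_k=y)$ between $\alpha_k$ and $\beta_k$ under the (nonnegative) covariance kernel, and uses the Gaussian identity $\int_{-\infty}^{\infty}\bigl(F(x\wedge y)-F(x)F(y)\bigr)dy=\phi(x)$, so that each quotient is exactly $1$ and $\Theta_{T,h}=\sum_k\partial_kT\cdot(\text{quotient in }[\alpha_k,\beta_k])$ is squeezed immediately. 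You instead integrate by parts (as in the Curie--Weiss computation) to rewrite the quotient as $R_k(x)=\phi(x)^{-1}\int_x^\infty h_k(X,X_k=y)\phi(y)dy$ and prove $\alpha_k\le R_k\le\beta_k$ via the sign lemma for monotone functions with zero Gaussian mean, together with $\int_x^\infty y\phi(y)dy=\phi(x)$. The two routes are mathematically equivalent (your sign lemma is, in effect, a proof of the positivity of the covariance kernel applied to $h_k-\alpha_k y$ and $h_k-\beta_k y$), but the paper's is shorter because the Gaussian kernel identity does all the work at once, while yours is more self-contained in that it never needs to invoke the kernel's pointwise nonnegativity, and you also make explicit several points the paper passes over silently: the justification for differentiating under the conditional expectation, the treatment of the exponent for $x<0$, and the argument behind the sharpness claim. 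One small caveat shared with the paper: the existence assertion needs $\sigma_1^2>0$ (some $\alpha_k>0$), which both you and the paper use implicitly.
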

\begin{proof} We choose to use the following decomposition of $T:$ $$h_k(X):=E[T|X_1,...,X_k]-E[T|X_1,...,X_{k-1}],1\leq k\leq n.$$  We have $\partial_k h_k(X)=E[\partial_kT|X_1,...,X_k]$ and hence,
$$\alpha_k\leq \partial_k h_k(X)\leq \beta_k\,\,a.s.$$
for all $1\leq k\leq n.$ Recalling the definition of $\Theta_{T,h},$ we deduce
$$
\sum\limits_{k=1}^n \alpha_k^2\frac{\int_{-\infty}^{\infty} (F(X_k\wedge y)-F(X_k)F(y))dy}{p(X_k)}\leq \Theta_{T,h}\leq \sum\limits_{k=1}^n \beta_k^2\frac{\int_{-\infty}^{\infty} (F(X_k\wedge y)-F(X_k)F(y))dy}{p(X_k)},
$$
where $p,F$ denote the density and cumulative distribution function of standard normal random variable, respectively. By straightforward computations, we get
$$\frac{\int_{-\infty}^{\infty} (F(X_k\wedge y)-F(X_k)F(y))dy}{p(X_k)}=1,\,\,\,1\leq k\leq n.$$
So it holds that
$$
\sum\limits_{k=1}^n \alpha_k^2\leq \Theta_{T,h}\leq \sum\limits_{k=1}^n \beta_k^2\,\,a.s.
$$
Thus we have $\sigma_1^2\leq \theta(T)\leq \sigma_2^2\,\,a.s.$ and by using the density formula (\ref{jkf4}) we obtain
$$\frac{c}{\sigma_2^2}\exp\left({-\frac{x^2}{2\sigma_1^2}}\right)\leq p_T(x)\leq \frac{c}{\sigma_1^2}\exp\left({-\frac{x^2}{2\sigma_2^2}}\right),\,\,x\in {\rm Supp}(p_T).$$
The proof of Proposition is complete.
\end{proof}
\begin{prop}\label{jk9} Suppose that $X_1,\cdots,X_n$ are uniformly distributed on the interval $[0,1].$ We have the following identity
\begin{equation}\label{uif}
E[X_1^2+\cdots+X_n^2|X_1+\cdots+X_n=x]=x-2\frac{\int_{x}^{n} (y-n/2)p_Z(y)dy}{p_Z(x)},\,\,0\leq x\leq n,
\end{equation}
where $p_Z(x)=0$ if $x\notin [0,n]$ and
$$p_Z(x)=\frac{1}{(n-1)!}\sum\limits_{0\leq k\leq x}(-1)^k\binom{n}{k}(x-k)^{n-1},\,\,0\leq x\leq n.$$
\end{prop}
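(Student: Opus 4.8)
The plan is to apply the identity $E[\Theta_{T,h}\mid T=x]=\varphi(x)/p_T(x)$ from Remark 2.3 (equivalently, relation (2.9)) to the statistic $T:=Z-E[Z]$ where $Z:=X_1+\cdots+X_n$. Since each $X_k$ is uniform on $[0,1]$, we have $E[X_k]=1/2$ and $E[X_k^2]=1/3$, so $E[Z]=n/2$, and $Z$ ranges over $[0,n]$ with the classical Irwin--Hall density $p_Z$ displayed in the statement. First I would compute $\Theta_{T,h}$ explicitly using the natural decomposition $h_k(X)=X_k-1/2$, so that $\partial_k T=\partial_k h_k=1$ and $F_k(y)=y$, $p_k(y)=1$ on $[0,1]$. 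This reduces each summand to $\int_0^1(F_k(X_k\wedge y)-F_k(X_k)F_k(y))\,dy$, which by a short integration-by-parts (exactly as in the third Proposition of Section 3) equals $\int_{X_k}^1 h_k(y)\,dy=\int_{X_k}^1 (y-1/2)\,dy = \tfrac12\big((1-X_k^2) - (1-X_k)\big)=\tfrac12 X_k(1-X_k)=\tfrac12 X_k-\tfrac12 X_k^2$. Summing over $k$ gives $\Theta_{T,h}=\tfrac12 Z-\tfrac12\sum_k X_k^2$.

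Next I would insert this into $E[\Theta_{T,h}\mid T=x-n/2]=\varphi(x-n/2)/p_T(x-n/2)$, using $p_T(x-n/2)=p_Z(x)$ and $\varphi(x-n/2)=\int_{x-n/2}^{b} u\, p_T(u)\,du = \int_x^{n}(y-n/2)\,p_Z(y)\,dy$ after the substitution $u=y-n/2$. On the left side, conditioning on $Z=x$ gives $E[\Theta_{T,h}\mid Z=x]=\tfrac12 x-\tfrac12 E[\sum_k X_k^2\mid Z=x]$. Rearranging yields
$$E\Big[\sum_{k=1}^n X_k^2\,\Big|\,Z=x\Big]=x-2\,\frac{\int_x^n (y-n/2)\,p_Z(y)\,dy}{p_Z(x)},$$
which is precisely (2.10). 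I should also note that $p_T$ has connected support on an interval $[a,b]$ with $a=-n/2$, $b=n/2$ (so that $\varphi$ is given by the stated tail integral), and that $p_Z(x)>0$ on $(0,n)$ so the division is legitimate there; for $x=0$ or $x=n$ the identity is read as a limit or simply excluded, consistent with the stated range.

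The main obstacle is not conceptual but a matter of bookkeeping: one must be careful that the hypotheses of Proposition 2.2 and Theorem 2.1 genuinely apply here — in particular that $h_k(x)=x-1/2$ is a valid decomposition in the sense of Definition 2.1 (it is: it is absolutely continuous in $x_k$, centered under $X_k$, and square-integrable), and that $Z$ indeed has a density (which follows from Theorem 2.1 since $\theta(T)=E[\Theta_{T,h}\mid T]=\tfrac12 E[Z-\sum X_k^2\mid Z]>0$ a.s., the integrand $\sum X_k(1-X_k)$ being strictly positive a.s.). A second minor point is justifying the integration by parts that collapses the double integral against $F_k(x\wedge y)-F_k(x)F_k(y)$ into the single tail integral $\int_{X_k}^1 h_k(y)\,dy$; this is the same computation already carried out in the Curie--Weiss example, so it can be cited rather than repeated. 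Once these checks are in place, the identity (2.10) drops out immediately from (2.9).
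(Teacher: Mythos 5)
Your proposal is correct and follows essentially the same route as the paper: the decomposition $h_k(x)=x-\tfrac12$, the computation $\Theta_{T,h}=\tfrac12\sum_k(X_k-X_k^2)$, and the identity $E[\Theta_{T,h}\mid T=x]=\int_x^b y\,p_T(y)\,dy/p_T(x)$ applied to $T=Z-n/2$ with the Irwin--Hall density, followed by the same change of variables. The only cosmetic difference is that you evaluate the kernel integral by citing the integration-by-parts step from the Curie--Weiss example, whereas the paper computes it directly; the substance is identical.
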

\begin{proof}It is well know that $p_Z(x)$ is the density of $Z:=X_1+\cdots+X_n.$ Then, the density of $T:=Z-E[Z]=Z-\frac{n}{2}$ is given by
$$p_T(x)=p_Z(x+n/2)=\frac{1}{(n-1)!}\sum\limits_{0\leq k\leq x+ \frac{n}{2}}(-1)^k\binom{n}{k}(x+ \frac{n}{2}-k)^{n-1},\,\, -\frac{n}{2}\leq x\leq \frac{n}{2}.$$
We use the functions $h_k(x)=x-\frac{1}{2},\,1\leq k\leq n$ as a decomposition of $T.$ We have
$$\Theta_{T,h}=\sum\limits_{k=1}^n\int_{0}^{1} (F(X_k\wedge y)-F(X_k)F(y))dy=\frac{1}{2}\sum\limits_{k=1}^n(X_k-X_k^2),$$
where $F$ denotes the cumulative distribution function of the uniform distribution on $[0,1].$ As a consequence, we get
\begin{align*}
E[X_1^2+\cdots+X_n^2|X_1+\cdots+X_n=x]&=E[X_1+\cdots+X_n-2\Theta_{T,h}|X_1+\cdots+X_n=x]\\
&=x-2E[\Theta_{T,h}|T=x-n/2]\\
&=x-2\frac{\int_{x-n/2}^{n/2} yp_T(y)dy}{p_T(x-n/2)}\\
&=x-2\frac{\int_{x}^{n} (y-n/2)p_Z(y)dy}{p_Z(x)}.
\end{align*}
This finishes the proof of Proposition. To the best of our knowledge, the identity (\ref{uif}) is new and it is difficult to be proven directly.
\end{proof}

\end{document}